\newdimen\plusheight
\def\+{\;\lower\plusheight\hbox{$+$}\;}
\newdimen\minusheight
\def\-{\;\lower\minusheight\hbox{$-$}\;}
\newdimen\cdotsheight
\def\cds{\lower\cdotsheight\hbox{$\cdots$}}
\renewcommand{\(}{\left\(}
\renewcommand{\)}{\right\)}
\renewcommand{\[}{\left[}
 \theoremstyle{plain}
\newtheorem{theorem}{Theorem}[section]
\newtheorem{lemma}[theorem]{Lemma}
\newtheorem{corollary}[theorem]{Corollary}
\newtheorem{definition}{Definition}
\newtheorem{remark}[theorem]{Remark}
\newenvironment{pf}
   {\vskip 0.15in \par\noindent{\bf Proof of Theorem}\hskip 0.5em\ignorespaces}
   {\hfill $\Box$\par\medskip}
\begin{document}
\title[Hypergeometric functions and a family of algebraic curves]
{Hypergeometric functions and a family of algebraic curves}
\author{Rupam Barman}
\address{Department of Mathematical Sciences, Tezpur University, Napaam-784028, Sonitpur, Assam, INDIA}
\email{rupamb@tezu.ernet.in}
\author{Gautam Kalita}
\address{Department of Mathematical Sciences, Tezpur University, Napaam-784028, Sonitpur, Assam, INDIA}
\email{gautamk@tezu.ernet.in}
\vspace*{0.7in}
\begin{center}

{\bf HYPERGEOMETRIC FUNCTIONS \\AND A FAMILY OF ALGEBRAIC CURVES}\\[5mm]

Rupam Barman and Gautam Kalita\\[.2cm]
\end{center}
\vspace{.51cm}
\noindent\textbf{Abstract:} Let $\lambda \in \mathbb{Q}\setminus \{0, 1\}$ and $l \geq 2$, and denote by
$C_{l,\lambda}$ the nonsingular projective algebraic curve over $\mathbb{Q}$ with affine equation given by
$$y^l=x(x-1)(x-\lambda).$$
In this paper we define $\Omega(C_{l, \lambda})$ analogous to the real periods of elliptic curves and find a
relation with ordinary hypergeometric series. We also give a relation between the number of points on $C_{l, \lambda}$
over a finite field and Gaussian hypergeometric series. Finally we give an alternate proof of a result of \cite{rouse}.

\noindent{\footnotesize \textbf{Key Words}: algebraic curves; hypergeometric series.}

\noindent{\footnotesize 2010 Mathematics Classification Numbers: 11G20, 33C20}

\section{\bf Introduction}\label{secone}

Hypergeometric functions and their relations with algebraic curves have been studied by many mathematicians.
For $a_0, a_1, \ldots, a_r, b_1, b_2, \ldots, b_r \in \mathbb{C}$, the ordinary hypergeometric series ${_{r+1}}F_r$
is defined as
$${_{r+1}}F_r\left(\begin{array}{cccc}
                a_0, & a_1, & \cdots, & a_r\\
                 & b_1, & \cdots, & b_r
              \end{array}\mid z \right):
              =\sum_{n=0}^{\infty}\frac{(a_0)_n(a_1)_n\cdots (a_r)_n}{(b_1)_n(b_2)_n\cdots (b_r)_n}\frac{z^n}{n!},$$
where $(a)_0=1$, $(a)_n:=a(a+1)(a+2)\ldots(a+n-1)$ for $n\geq 1$, and none of the $b_i$ is a negative integer or zero.
This hypergeometric series converges absolutely for $|z|<1$. The series also converges absolutely for
$|z|=1$ if $\text{Re}(\sum b_i - \sum a_i)> 0$ and converges conditionally for $|z|=1$, $z\neq 1$
if $0\geq \text{Re}(\sum b_i - \sum a_i)> -1$. For details see \cite[chapter 2]{andrews}.

In \cite{greene} Greene introduced the notion of Gaussian hypergeometric series over finite fields.
Since then, the interplay between ordinary hypergeometric series and Gaussian hypergeometric series has played
an important role in character sum evaluation \cite{greenestanton}, the representation theory of
$SL(2, \mathbb{R})$ \cite{greene1} and finding the number of points on an algebraic curve over finite
fields \cite{ono}. Recently, J. Rouse \cite{rouse} and D. McCarthy \cite{mccarthy} provided an expression for
the real period of certain families of elliptic curves in terms of ordinary hypergeometric series. They also
provided an analogous expression for the trace of Frobenius of the same family of curves in terms of Gaussian
hypergeometric series, which developed the interplay between the two hypergeometric series more fully.

We will now restate some definitions from \cite{greene} which are analogous to the binomial coefficient and ordinary
hypergeometric series respectively. Throughout the paper $p$ is an odd prime. We also let $\mathbb{F}_p$ denote the
finite field with $p$ elements and we extend all characters $\chi$ of $\mathbb{F}_p^\times$ to $\mathbb{F}_p$ by
setting $\chi(0)=0$. For characters $A$ and $B$ of $\mathbb{F}_p$, define ${A \choose B}$ as
\begin{align}\label{eq0}
{A \choose B}:=\frac{B(-1)}{p}J(A,\overline{B})=\frac{B(-1)}{p}\sum_{x \in \mathbb{F}_p}A(x)\overline{B}(1-x),
\end{align}
where $J(A, B)$ is the Jacobi sum of the characters $A$ and $B$ of $\mathbb{F}_p$ and $\overline{B}$ is the
inverse of $B$. With this notation, for characters $A_0, A_1,\ldots, A_n$ and $B_1, B_2,\ldots, B_n$ of $\mathbb{F}_p$,
the Gaussian hypergeometric series ${_{n+1}}F_n\left(\begin{array}{cccc}
                A_0, & A_1, & \cdots, & A_n\\
                 & B_1, & \cdots, & B_n
              \end{array}\mid x \right)$ over $\mathbb{F}_p$ is defined as
\begin{align}\label{eq00}
{_{n+1}}F_n\left(\begin{array}{cccc}
                A_0, & A_1, & \cdots, & A_n\\
                 & B_1, & \cdots, & B_n
              \end{array}\mid x \right):
              =\frac{p}{p-1}\sum_{\chi}{A_0\chi \choose \chi}{A_1\chi \choose B_1\chi}
              \cdots {A_n\chi \choose B_n\chi}\chi(x),
\end{align}
where the sum is over all characters $\chi$ of $\mathbb{F}_p$.

Let $\lambda \in \mathbb{Q}\setminus\{0, 1\}$ and $l \geq 2$, and denote by $C_{l,\lambda}$
the nonsingular projective algebraic curve over $\mathbb{Q}$ with affine equation given by
\begin{align}\label{curve1}
y^l=x(x-1)(x-\lambda).
\end{align}
The change of variables $(x, y) \mapsto (x+\frac{1+\lambda}{3}, \frac{y}{2})$ takes \eqref{curve1} to
\begin{align}\label{curve2}
y^l=2^l(x-a)(x-b)(x-c),
\end{align}
where $a=-\frac{1+\lambda}{3}$, $b=\frac{2\lambda-1}{3}$, and $c=\frac{2-\lambda}{3}$.

We now define an integral for the family of curves \eqref{curve1} analogous to the real period of elliptic curves.
\begin{definition}
The complex number $\Omega(C_{l,\lambda})$ is defined as
\begin{align}\label{definition1}
\Omega(C_{l, \lambda}):=2\int_a^b\frac{dx}{y^{l-1}},
\end{align}
where $x$ and $y$ are related as in \eqref{curve2}.
\end{definition}
\begin{definition}
Suppose $p$ is a prime of good reduction for $C_{l, \lambda}$. Define the integer $a_p(C_{l, \lambda})$ by
\begin{align}\label{eq45}
a_p(C_{l, \lambda}):=1+p-\#C_{l, \lambda}(\mathbb{F}_p),
\end{align}
where $\#C_{l, \lambda}(\mathbb{F}_p)$ denotes the number of points that the curve $C_{l, \lambda}$ has over
$\mathbb{F}_p$.
\end{definition}
It is clear that a prime $p$ not dividing $l$ is of good reduction for $C_{l, \lambda}$ if and only if
$\text{ord}_p(\lambda(\lambda-1))=0.$
\begin{remark}
Let $l \neq 3$. Then $$\#C_{l, \lambda}(\mathbb{F}_p)=1+\#\{(x, y)\in \mathbb{F}_p^2: y^l=x(x-1)(x-\lambda)\}.$$
Indeed, for $l\geq 4$, the point $[1:0:0]$ is the only point at infinity. Similarly, if $l=2$,
the point at infinity is $[0:1:0]$.

Let $l=3$ and $p\equiv 1$ $($\emph{mod} $3)$. Let $\omega \in \mathbb{F}_p^{\times}$ be of order $3$.
Then there are three points at infinity, namely, $[1:1:0], [1:\omega: 0],$ and $[1:\omega^2:0]$.
Hence, in this case, $$\#C_{l, \lambda}(\mathbb{F}_p)=3+\#\{(x, y)\in \mathbb{F}_p^2: y^l=x(x-1)(x-\lambda)\}.$$

Again, if $l=3$ and $p\equiv 2$ $($\emph{mod} $3)$, then the point at infinity is $[1:1:0]$.
\end{remark}
\begin{remark}\label{lem9}
If $l=3$, $C_{l, \lambda}$ is an elliptic curve. Dehomogenizing the projective curve
$C_{3, \lambda}: Y^3=X(X-Z)(X-\lambda Z)$ by putting $X=1$ and then making the substitution
$$Y\rightarrow \lambda x, Z\rightarrow \lambda\left(y+\dfrac{1+\lambda}{2\lambda^2}\right),$$
we find that $C_{3, \lambda}$ is isomorphic over $\mathbb{Q}$ to the elliptic curve
\begin{align}\label{curve3}
y^2=x^3+\left(\frac{\lambda - 1}{2\lambda^2}\right)^2.
\end{align}
\end{remark}
\begin{remark} If $l=2$, then equation \eqref{curve1} gives an elliptic curve in
Legendre normal form with real period $\Omega(C_{2, \lambda})$, and $a_p(C_{2, \lambda})$
is the trace of the Frobenius endomorphism on the curve over $\mathbb{F}_p$.
\end{remark}

We now recall two results relating $\Omega(C_{2, \lambda})$ and $a_p(C_{2, \lambda})$ to hypergeometric series.
\begin{theorem}\cite{housemoller,rouse}\label{theorem1}
If $0<\lambda<1$, then the real period $\Omega(C_{2, \lambda})$ satisfies
$$\frac{\Omega(C_{2, \lambda})}{\pi}={_{2}}F_1\left(\begin{array}{cccc}
                1/2, & 1/2\\
                 & 1
              \end{array}\mid \lambda \right).$$
\end{theorem}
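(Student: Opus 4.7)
My plan is to reduce $\Omega(C_{2,\lambda})$ to the classical complete elliptic integral of the first kind, and then recognize it via Euler's integral representation of ${}_2F_1$.

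First I would use the defining equation \eqref{curve2} with $l=2$ to write $y = 2\sqrt{(x-a)(x-b)(x-c)}$ on the segment of integration. For $0<\lambda<1$, a direct check with $a=-(1+\lambda)/3$, $b=(2\lambda-1)/3$, $c=(2-\lambda)/3$ shows $a<b<c$, so on $[a,b]$ we have $(x-a)\geq 0$, $(b-x)\geq 0$, $(c-x)\geq 0$; hence the square root is real and we may take the positive branch. The definition \eqref{definition1} then becomes
\begin{equation*}
\Omega(C_{2,\lambda})=\int_a^b\frac{dx}{\sqrt{(x-a)(b-x)(c-x)}}.
\end{equation*}

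Next I would apply the affine change of variable $x=a+(b-a)t$, $dx=(b-a)\,dt$, which sends $[a,b]$ to $[0,1]$ and gives
\begin{equation*}
\Omega(C_{2,\lambda})=\int_0^1\frac{dt}{\sqrt{t(1-t)\bigl((c-a)-(b-a)t\bigr)}}.
\end{equation*}
Now the critical arithmetic step: compute $b-a=\lambda$ and $c-a=1$ from the explicit values of $a,b,c$. Substituting yields
\begin{equation*}
\Omega(C_{2,\lambda})=\int_0^1\frac{dt}{\sqrt{t(1-t)(1-\lambda t)}}.
\end{equation*}

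Finally I would invoke Euler's integral representation
\begin{equation*}
{}_2F_1\!\left(\begin{array}{cc}a,&b\\ &c\end{array}\mid z\right)=\frac{\Gamma(c)}{\Gamma(b)\Gamma(c-b)}\int_0^1 t^{b-1}(1-t)^{c-b-1}(1-zt)^{-a}\,dt,
\end{equation*}
valid for $\operatorname{Re}(c)>\operatorname{Re}(b)>0$, with parameters $a=b=1/2$, $c=1$, $z=\lambda$. Using $\Gamma(1/2)^2=\pi$ this gives $\pi\cdot{}_2F_1(1/2,1/2;1;\lambda)=\int_0^1 dt/\sqrt{t(1-t)(1-\lambda t)}$, which matches the expression above and completes the proof.

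The argument is essentially routine once the setup is right, and I do not expect a serious obstacle; the only place that requires a little care is verifying that $0<\lambda<1$ really does force $a<b<c$ so that the integrand is real on $[a,b]$, and checking the two cubic differences $b-a=\lambda$, $c-a=1$ that make the elliptic integral come out in the canonical Legendre form with modulus parameter exactly $\lambda$.
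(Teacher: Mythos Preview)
Your proof is correct and follows essentially the same route as the paper (which obtains Theorem~\ref{theorem1} as the $l=2$ case of Theorem~\ref{theorem3}): compute $b-a=\lambda$, $c-a=1$, reduce $\Omega$ to an Euler-type integral, and identify it as ${}_2F_1$. The only cosmetic difference is that the paper uses the trigonometric substitution $x-a=(b-a)\sin^2\theta$ together with the $\sin$--$\cos$ form \eqref{eq1} of Euler's integral, whereas you use the equivalent linear substitution $x=a+(b-a)t$ and the standard $[0,1]$ form; these are related by $t=\sin^2\theta$.
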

\begin{theorem}\cite{koike, ono}\label{theorem2}
If \emph{ord}$_p(\lambda(\lambda-1))=0$, then
$$-\frac{\phi(-1)a_p(C_{2, \lambda})}{p}={_{2}}F_1\left(\begin{array}{cccc}
                \phi, & \phi\\
                 & \epsilon
              \end{array}\mid \lambda \right),$$
where $\phi$ and $\epsilon$ are the quadratic and trivial characters of $\mathbb{F}_p$ respectively.
\end{theorem}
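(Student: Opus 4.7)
\medskip
\noindent\textbf{Proof plan.} The strategy is to express $a_p(C_{2,\lambda})$ as a short character sum, bring that sum into the shape of Greene's integral representation of $_{2}F_1$ by a linear change of variable, and then match coefficients after specialization.

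First, because $l=2$, the remark on points at infinity gives
$$\#C_{2,\lambda}(\mathbb{F}_p)=1+\#\{(x,y)\in\mathbb{F}_p^2:y^2=x(x-1)(x-\lambda)\}.$$
For a fixed $x\in\mathbb{F}_p$, the number of $y\in\mathbb{F}_p$ with $y^2$ equal to a prescribed value $v$ is $1+\phi(v)$ under the convention $\phi(0)=0$. Summing in $x$ and using \eqref{eq45}, I obtain
$$a_p(C_{2,\lambda})=-\sum_{x\in\mathbb{F}_p}\phi(x)\phi(x-1)\phi(x-\lambda).$$

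Second, since good reduction forces $\lambda\in\mathbb{F}_p^\times$, the substitution $x\mapsto\lambda x$ is a bijection of $\mathbb{F}_p$ and gives
$$\sum_x\phi(x(x-1)(x-\lambda))=\phi(\lambda^2)\sum_x\phi(x(x-1)(\lambda x-1))=\sum_x\phi(x)\phi(x-1)\phi(\lambda x-1).$$
Pulling a $\phi(-1)$ out of each of $\phi(x-1)$ and $\phi(\lambda x-1)$ and using $\phi(-1)^2=1$, this simplifies to $\sum_x\phi(x)\phi(1-x)\phi(1-\lambda x)$.

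Third, I would invoke Greene's integral representation of the Gaussian $_2F_1$ (Theorem~3.6 of \cite{greene}),
$${_{2}}F_1\left(\begin{array}{cc}A,&B\\&C\end{array}\mid z\right)=\epsilon(z)\,\frac{BC(-1)}{p}\sum_{x\in\mathbb{F}_p}B(x)\,\overline{B}C(1-x)\,\overline{A}(1-zx),$$
and specialize $A=B=\phi$, $C=\epsilon$. Using $\overline{\phi}=\phi$ and $\epsilon(\lambda)=1$, the right-hand side collapses to $\frac{\phi(-1)}{p}\sum_x\phi(x)\phi(1-x)\phi(1-\lambda x)$, which by the previous paragraph equals $-\phi(-1)a_p(C_{2,\lambda})/p$, exactly the claimed identity. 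The main obstacle is the bookkeeping in the change-of-variable step: tracking the factors of $\phi(-1)$ and $\phi(\lambda)$ that appear, and choosing the version of Greene's representation whose character arguments match the point-count sum. Once those are aligned, the identity follows immediately.
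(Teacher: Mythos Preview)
Your proof is correct and follows essentially the same route the paper takes: the paper derives Theorem~\ref{theorem2} as the $l=2$ specialization of Theorem~\ref{theorem4}, whose proof uses exactly your three ingredients---the character-sum point count via Lemma~\ref{lemma1}, the linear substitution $t\mapsto t/\lambda$ (the inverse of your $x\mapsto\lambda x$), and Greene's integral representation \eqref{eq11}---only in the reverse order, starting from the hypergeometric side rather than from $a_p$. The bookkeeping you flagged as the main obstacle is handled identically, with the observation $\phi(-\lambda^2)=\phi(-1)$ playing the same role as your $\phi(\lambda^2)=1$ together with the two factors of $\phi(-1)$.
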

In this paper we generalize these results to the algebraic curves $C_{l, \lambda}$.
The aim of this paper is to prove the following main results.
\begin{theorem}\label{theorem3}
If $0< \lambda<1$, then $\Omega(C_{l, \lambda})$ is given by
$$\Omega(C_{l, \lambda}) =\frac{(\Gamma(\frac{1}{l}))^2}{2^{l-2}\lambda^{\frac{l-2}{l}}
\Gamma(\frac{2}{l})}\cdot{_{2}}F_1\left(\begin{array}{cccc}
                (l-1)/l, & 1/l\\
                 & 2/l
              \end{array}\mid \lambda \right).$$
\end{theorem}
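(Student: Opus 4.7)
The plan is to reduce $\Omega(C_{l,\lambda})$ to a beta-type integral and match it against Euler's integral representation of ${_2}F_1$. First I would substitute $y^{l-1}=2^{l-1}\bigl((x-a)(x-b)(x-c)\bigr)^{(l-1)/l}$ coming from the model \eqref{curve2} into the definition \eqref{definition1}, so that
\[
\Omega(C_{l,\lambda})=\frac{1}{2^{l-2}}\int_{a}^{b}\frac{dx}{\bigl((x-a)(x-b)(x-c)\bigr)^{(l-1)/l}}.
\]
On the interval $[a,b]$ one has $(x-a)\ge0$ while $(x-b),(x-c)\le0$, so the radicand is positive and I will rewrite it as $(x-a)(b-x)(c-x)$ to make the positivity manifest.

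Next I would normalise the interval by the linear change of variable $u=(x-a)/(b-a)$. The key numerical miracle that makes the whole argument work is that, for $0<\lambda<1$,
\[
b-a=\lambda,\qquad c-a=1,
\]
so that $x-a=\lambda u$, $b-x=\lambda(1-u)$, and $c-x=1-\lambda u$. Consequently
\[
(x-a)(b-x)(c-x)=\lambda^{2}u(1-u)(1-\lambda u),\qquad dx=\lambda\,du,
\]
and after collecting powers of $\lambda$ (the exponent simplifies via $1-2(l-1)/l=-(l-2)/l$) the period becomes
\[
\Omega(C_{l,\lambda})=\frac{1}{2^{l-2}\lambda^{(l-2)/l}}\int_{0}^{1}u^{-(l-1)/l}(1-u)^{-(l-1)/l}(1-\lambda u)^{-(l-1)/l}\,du.
\]

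Finally I would invoke Euler's integral representation
\[
{_2}F_1\!\left(\alpha,\beta;\gamma;z\right)=\frac{\Gamma(\gamma)}{\Gamma(\beta)\Gamma(\gamma-\beta)}\int_{0}^{1}t^{\beta-1}(1-t)^{\gamma-\beta-1}(1-zt)^{-\alpha}\,dt,
\]
valid for $\operatorname{Re}(\gamma)>\operatorname{Re}(\beta)>0$. Matching the exponents forces $\alpha=(l-1)/l$, $\beta=1/l$, $\gamma=2/l$, and the condition $2/l>1/l>0$ holds for every $l\ge2$, so the representation applies and produces the claimed prefactor $(\Gamma(1/l))^{2}/\Gamma(2/l)$.

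There is no real obstacle here beyond careful bookkeeping: the only things to watch are (i) correctly identifying the branch so that the real integral makes sense (guaranteed by $a<b<c$ under $0<\lambda<1$), (ii) verifying integrability at the endpoints $u=0,1$, which is immediate since $(l-1)/l<1$, and (iii) tracking the exponent of $\lambda$ through the substitution. The "miracle" $c-a=1$ is what allows the factor $(c-x)$ to become exactly $(1-\lambda u)$ so that Euler's formula applies with parameter $\lambda$ rather than some rescaling of it.
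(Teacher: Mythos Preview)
Your proof is correct and follows essentially the same route as the paper: the paper uses the trigonometric substitution $x-a=(b-a)\sin^2\theta$ together with the $\sin$--$\cos$ form \eqref{eq1} of Euler's integral, while you use the linear substitution $u=(x-a)/(b-a)$ and the algebraic form of the same representation, which are related by $u=\sin^2\theta$. The crucial inputs---the sign analysis on $[a,b]$, the identities $b-a=\lambda$ and $c-a=1$, and the identification of the hypergeometric parameters---are identical.
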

\begin{theorem}\label{theorem4}
If $p\equiv 1$ $($\emph{mod} $l)$ and \emph{ord}$_p(\lambda(\lambda-1))=0$, then $a_p(C_{l, \lambda})$ satisfies
$$ -a_p(C_{l, \lambda})=\left\{
\begin{array}{ll}
p\cdot\displaystyle\sum_{i=1}^{l-1}\chi^i(-\lambda^2){_{2}}F_1\left(\begin{array}{cccc}
                \overline{\chi^i}, & \chi^i\\
                 & \chi^{2i}
              \end{array}\mid \lambda \right), & \hbox{if $l\neq 3$;} \\
              2+ p\cdot\displaystyle\sum_{i=1}^{l-1}\chi^i(-\lambda^2){_{2}}F_1\left(\begin{array}{cccc}
                \overline{\chi^i}, & \chi^i\\
                 & \chi^{2i}
              \end{array}\mid \lambda \right), & \hbox{if $l=3$,}
                                    \end{array}
                                  \right.$$
where $\chi$ is a character of $\mathbb{F}_p$ of order $l$.
\end{theorem}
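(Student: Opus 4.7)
\emph{Proof plan for Theorem~\ref{theorem4}.} The approach is to count $\#C_{l,\lambda}(\F_p)$ directly by expanding the condition $y^l = g(x)$, with $g(x):=x(x-1)(x-\lambda)$, in multiplicative characters of order $l$, and then to recognise the resulting cubic character sums as Greene's ${_2F_1}$'s via a linear dilation $x=\lambda u$ together with one of Greene's integral representations.

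By the remark, write $\#C_{l,\lambda}(\F_p) = r_l + N$ where $r_l=1$ if $l\neq 3$ and $r_l=3$ in the remaining subcase $l=3$ (for which the hypothesis $p\equiv 1\pmod{3}$ is used), and $N$ is the number of $(x,y)\in\F_p^2$ with $y^l=g(x)$. Since $p\equiv 1\pmod{l}$, there is a character $\chi$ of $\F_p^\times$ of order $l$; for $a\in\F_p^\times$ one has $\#\{y\in\F_p:y^l=a\}=\sum_{i=0}^{l-1}\chi^i(a)$. Treating the three values of $x$ with $g(x)=0$ (each giving the single solution $y=0$) separately, and isolating the $i=0$ summand (which contributes $p-3$ since $\epsilon(0)=0$), I obtain
$$N = p + \sum_{i=1}^{l-1} S_i, \qquad S_i := \sum_{x\in\F_p}\chi^i(x)\,\chi^i(x-1)\,\chi^i(x-\lambda).$$

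The crux is evaluating $S_i$. Setting $T:=\chi^i$ and substituting $x=\lambda u$, together with $T(u-1)\,T(\lambda u-1)=T(-1)^2\,T(1-u)\,T(1-\lambda u)=T(1-u)\,T(1-\lambda u)$, collapses $S_i$ to
$$S_i = T(\lambda^2)\sum_{u\in\F_p} T(u)\,T(1-u)\,T(1-\lambda u).$$
I would then apply Greene's integral representation (Theorem~3.6 of \cite{greene}),
$${_2F_1}(A,B;C\mid\lambda) = \frac{BC(-1)}{p}\sum_{u\in\F_p} B(u)\,\ol{B}C(1-u)\,\ol{A}(1-\lambda u),$$
with $(A,B,C)=(\ol{T},T,T^2)$; since then $BC(-1)=T(-1)$, $\ol{B}C=T$, and $\ol{A}=T$, this reads $\sum_u T(u)T(1-u)T(1-\lambda u)=p\,T(-1)\,{_2F_1}(\ol{T},T;T^2\mid\lambda)$, so that $S_i=p\,T(-\lambda^2)\,{_2F_1}(\ol{\chi^i},\chi^i;\chi^{2i}\mid\lambda)$, using $T(-1)^2=1$ and $T(\lambda^2)T(-1)=T(-\lambda^2)$.

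Substituting back into the definition $a_p(C_{l,\lambda})=1+p-\#C_{l,\lambda}(\F_p)$ yields $-a_p=\sum_{i=1}^{l-1}S_i$ when $l\neq 3$ and $-a_p=2+\sum_{i=1}^{l-1}S_i$ when $l=3$, which is exactly the two-branch formula claimed. The main obstacle I expect is identifying the correct specialization $(A,B,C)=(\ol{T},T,T^2)$ of Greene's integral formula so that the prefactor comes out as $\chi^i(-\lambda^2)$ and the parameters align with $(\ol{\chi^i},\chi^i;\chi^{2i})$; once the dilation $x=\lambda u$ and this specialization are in place, the rest is routine bookkeeping with $T(-1)^2=1$.
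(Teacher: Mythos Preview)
Your proposal is correct and follows essentially the same route as the paper. The only cosmetic difference is the direction of the argument: the paper starts from the hypergeometric side, expands each ${_2F_1}(\overline{\chi^i},\chi^i;\chi^{2i}\mid\lambda)$ via the same integral representation \eqref{eq11}, makes the inverse substitution $t\mapsto t/\lambda$ to reach $\sum_t\chi^i(t(t-1)(t-\lambda))$, and then matches this with the character-sum point count from Lemma~\ref{lemma1}; you run the same computation from the point count toward the ${_2F_1}$.
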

\begin{theorem}\label{theorem5}
For $\lambda=\frac{1}{2}$, we have
\begin{align}\label{eq1/2}
\frac{2^{\frac{(l-3)(l-1)}{l}}\Gamma(\frac{2}{l})}{(\Gamma(\frac{1}{l}))^2}\cdot\Omega(C_{l, \lambda})
=\frac{\displaystyle \binom{\frac{1}{2l}}{\frac{1}{l}}}{\displaystyle \binom{\frac{3-2l}{2l}}{\frac{2-l}{l}}}.
\end{align}
Moreover, if $p\equiv 1$ $($\emph{mod} $l)$, then
\begin{align}\label{eq1/22}
-a_p(C_{l, \lambda})=\left\{\begin{array}{lll}
p\cdot\displaystyle \sum_{i=1}^{\lfloor\frac{l-1}{2}\rfloor}\chi^{-2i}(8)
\left[\displaystyle \binom{\chi^i}{\chi^{-2i}}+{\phi\chi^i \choose \chi^{-2i}}\right],
& \hbox{if $\frac{p-1}{l}$ is odd and $l\neq 3$;} \\
p\cdot\displaystyle\sum_{i=1}^{l-1}\chi^{-i}(8)
\left[\displaystyle \binom{\sqrt{\chi^i}}{\chi^{-i}}+{\phi\sqrt{\chi^i} \choose \chi^{-i}}\right],
& \hbox{if $\frac{p-1}{l}$ is even and $l\neq 3$;}\\
2+p\cdot\displaystyle\sum_{i=1}^{2}
\left[\displaystyle \binom{\sqrt{\chi^i}}{\chi^{-i}}+{\phi\sqrt{\chi^i} \choose \chi^{-i}}\right], & \hbox{if $l= 3$,}
\end{array}
\right.
\end{align}
where $\chi$ is a character of $\mathbb{F}_p$ of order $l$ and $\phi$ is the quadratic character.
\end{theorem}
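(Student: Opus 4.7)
The plan is to deduce both halves of Theorem~\ref{theorem5} by specialising Theorems~\ref{theorem3} and~\ref{theorem4} to $\lambda = 1/2$ and then evaluating the resulting hypergeometric expressions in closed form.

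For the period formula, I would substitute $\lambda = 1/2$ in Theorem~\ref{theorem3}. The upper parameters of the ${_2}F_1$ are $(l-1)/l$ and $1/l$, which sum to $1$, so Bailey's theorem (${_2}F_1(a,1-a;c;1/2) = \Gamma(c/2)\Gamma((c+1)/2)/(\Gamma((c+a)/2)\Gamma((c-a+1)/2))$) applies and gives a closed-form value in terms of $\Gamma(1/l)$, $\Gamma(3/(2l))$, $\Gamma((l+1)/(2l))$ and $\Gamma((l+2)/(2l))$. I would then invoke Legendre's duplication formula at $z=1/(2l)$ and $z=1/l$ to trade the two half-integer-shifted $\Gamma$-values for $\Gamma(1/(2l))$ and $\Gamma(2/l)$, and collect the remaining powers of $2$. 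Using $\binom{a}{b}=\Gamma(a+1)/(\Gamma(b+1)\Gamma(a-b+1))$, the surviving Gamma ratio $\Gamma(1/(2l))\Gamma(2/l)/(\Gamma(1/l)\Gamma(3/(2l)))$ rearranges into the displayed ratio of binomial coefficients. Apart from a careful bookkeeping of the exponents of $2$, nothing else is involved.

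For the formula for $a_p(C_{l,1/2})$ I would take Theorem~\ref{theorem4} at $\lambda=1/2$ and rewrite each Gaussian factor ${_2}F_1(\overline{\chi^i},\chi^i;\chi^{2i}\mid 1/2)$. The finite-field analogue of Kummer's (equivalently Bailey's) summation theorem, which is available in Greene's framework, expresses this value as a sum of two ${A \choose B}$-type binomial terms, one twisted by the quadratic character $\phi$. Inserting this into the sum of Theorem~\ref{theorem4} and absorbing the factor $\chi^i(-\lambda^2)=\chi^i(-1/4)$ into the power $\chi^{\pm i}(8)$ produces an expression of the desired shape.

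The last step is to reorganise the sum over $i=1,\dots,l-1$, and this is where the two cases according to the parity of $(p-1)/l$ arise. When $(p-1)/l$ is even, the character group of $\mathbb{F}_p^{\times}$ contains a square root $\sqrt{\chi^i}$ of each $\chi^i$, so the natural form of the finite-field summation already involves $\sqrt{\chi^i}$, giving the second branch of \eqref{eq1/22}. When $(p-1)/l$ is odd, some $\chi^i$ have no character square root; in this case one instead pairs $i$ with $l-i$ and uses standard identities for the $\binom{A}{B}$ symbol to fold the sum into the range $1\le i\le \lfloor (l-1)/2\rfloor$. The case $l=3$ is handled separately, inheriting the extra constant $2$ already present in Theorem~\ref{theorem4}. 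I expect the main obstacle to be precisely this case split: the existence of $\sqrt{\chi^i}$ and the correct evaluation of the $\phi$-twisted binomial coefficients both depend sensitively on the parity of $(p-1)/l$, and the consolidation of paired summands into the range over $\lfloor (l-1)/2\rfloor$ has to be checked character by character.
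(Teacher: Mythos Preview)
Your overall strategy matches the paper's: specialise Theorems~\ref{theorem3} and~\ref{theorem4} at $\lambda=1/2$ and evaluate the resulting ${_2}F_1$'s in closed form. Two points are worth noting.

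\emph{Part one.} The paper does not apply Bailey's formula for ${_2}F_1(a,1-a;c;1/2)$ directly. Instead it first uses Pfaff's transformation \eqref{eq3} to send $\lambda=1/2$ to $-1$ (picking up the factor $2^{(l-1)/l}$), and then evaluates ${_2}F_1\bigl((l-1)/l,\,1/l;\,2/l\mid -1\bigr)$ by Kummer's theorem \eqref{eq2}, which applies because $2/l = 1 + 1/l - (l-1)/l$. Your Bailey route is equally valid and yields the same Gamma ratio after the duplication identities you mention; the paper's route has the minor advantage that the ratio of binomial coefficients in \eqref{eq1/2} drops out of Kummer without any further duplication manipulations.

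\emph{Part two.} Your mechanism for the case $(p-1)/l$ odd is not quite right. You propose pairing $i$ with $l-i$, but $\chi^{l-i}=\overline{\chi^{\,i}}$ is a square if and only if $\chi^i$ is, so pairing does not rescue the terms lacking a square root. What actually happens (and what the paper does) is simpler: Greene's identity \eqref{eq5} says the ${_2}F_1$ at $1/2$ \emph{vanishes} when $B=\chi^i$ is not a square. When $(p-1)/l$ is odd, $\chi$ itself is not a square, so $\chi^i$ is a square precisely for even $i$. The odd-$i$ terms drop out, and writing the surviving even indices as $i=2j$ with $1\le j\le\lfloor (l-1)/2\rfloor$ gives the first line of \eqref{eq1/22} directly; the square root of $\chi^{2j}$ is just $\chi^j$, which explains why no $\sqrt{\chi}$ appears in that branch. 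The combination of prefactors $\chi^i(-\lambda^2)\cdot A(-2)=\chi^i(-1/4)\cdot\chi^{-i}(-2)=\chi^{-i}(8)$ is exactly as you say.
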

Here we extend the definition of binomial coefficient to include rational arguments via
$$\displaystyle \binom{n}{k}=\dfrac{\Gamma(n+1)}{\Gamma(k+1)\Gamma(n-k+1)}.$$

We also give a simple proof of the following result of J. Rouse (note that ${1/4 \choose 1/2}$ is real).
\begin{theorem}\label{theorem6}\cite[Theorem 3]{rouse} If $\lambda=1/2$, then
 $$\frac{\sqrt{2}}{2\pi}\cdot \Omega(C_{2, \lambda})={1/4 \choose 1/2}.$$ If $p\equiv 1$ $($\emph{mod} $4)$,
 then $$\frac{-\phi(-2)}{2p}\cdot a_p(C_{2, \lambda})=\emph{Re}{\chi_4 \choose \phi},$$
where $\chi_4$ is a character on $\mathbb{F}_p$ of order $4$ and $\phi$ is the quadratic character.
\end{theorem}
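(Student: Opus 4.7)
The plan is to obtain both assertions of Theorem~\ref{theorem6} as the $l=2$ specialization of Theorem~\ref{theorem5}, so that no new substantive work is required beyond simplifying Gamma factors, powers of $2$, and a pair of Jacobi sums.

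For the period identity, I would substitute $l=2$ directly into \eqref{eq1/2}. The prefactor on the left collapses since $(l-3)(l-1)/l = -\tfrac{1}{2}$, $\Gamma(2/l) = \Gamma(1) = 1$, and $(\Gamma(1/l))^2 = \pi$, so it becomes $\frac{1}{\sqrt{2}\,\pi} = \frac{\sqrt{2}}{2\pi}$. On the right, the denominator $\binom{(3-2l)/(2l)}{(2-l)/l}$ reduces to $\binom{-1/4}{0} = 1$ under the extended binomial convention, while the numerator is exactly $\binom{1/4}{1/2}$. This yields the first identity at once.

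For the Frobenius identity, since $p\equiv 1\pmod{4}$ the quotient $(p-1)/2$ is even, so I would use the middle case of \eqref{eq1/22}. The sum runs only over $i=1$, the character $\chi$ of order $l=2$ is $\phi$, and $\sqrt{\chi} = \sqrt{\phi}$ is a character of order $4$, which I identify with $\chi_4$. The second summand's character is $\phi\cdot\chi_4 = \chi_4^{3} = \overline{\chi_4}$. The key step is to combine the two binomial symbols into a single real part: using $J(\overline{A},\overline{B}) = \overline{J(A,B)}$, the self-conjugacy $\phi = \overline{\phi}$, and $\phi(-1)=1$ (from $p\equiv 1\pmod{4}$), one checks $\binom{\overline{\chi_4}}{\phi} = \overline{\binom{\chi_4}{\phi}}$, so that the sum equals $2\,\text{Re}\binom{\chi_4}{\phi}$. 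Finally $\phi(8) = \phi(2)^3 = \phi(2) = \phi(-2)$, where the last equality again uses $\phi(-1)=1$. Rearranging yields the claimed formula.

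The main point requiring care is the choice of square root $\sqrt{\phi}$: the right-hand side of \eqref{eq1/22} is independent of whether one picks $\chi_4$ or $\chi_4^{-1}$, because both summands appear symmetrically and $\text{Re}$ is invariant under complex conjugation of the underlying Jacobi sum. Beyond this, the argument is pure bookkeeping; all of the analytic and character-sum content has already been absorbed into Theorem~\ref{theorem5}, so there is no substantive obstacle here.
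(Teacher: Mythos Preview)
Your proposal is correct and follows essentially the same route as the paper: both parts are obtained by specializing Theorem~\ref{theorem5} to $l=2$, then simplifying the Gamma/binomial factors for the period identity and using $\phi\chi_4=\overline{\chi_4}$ together with $\phi(-1)=1$ to combine the two Jacobi-sum terms into $2\,\mathrm{Re}\binom{\chi_4}{\phi}$ for the Frobenius identity. The only cosmetic difference is that the paper does not explicitly discuss the ambiguity in choosing $\sqrt{\phi}$, but your remark that the formula is insensitive to this choice is a welcome clarification.
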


\section{\bf Preliminaries}

We start with a result which enables us to count the number of points on a curve using multiplicative characters
on $\mathbb{F}_p$ (see \cite[Proposition 8.1.5]{ireland}).
\begin{lemma}\label{lemma1}
Let $a\in\mathbb{F}_p^{\times}$. If $n|(p-1)$, then $$\#\{x\in\mathbb{F}_p: x^n=a\}=\sum \chi(a),$$
where the sum runs
over all characters $\chi$ on $\mathbb{F}_p$ of order dividing $n$.
\end{lemma}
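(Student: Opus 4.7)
The plan is to compare both sides of the identity after a dichotomy on whether $a$ is an $n$-th power in $\mathbb{F}_p^{\times}$. The key algebraic input is that $\mathbb{F}_p^{\times}$ is cyclic of order $p-1$, hence its dual group of multiplicative characters is also cyclic of order $p-1$. Since $n \mid (p-1)$, the set $H$ of characters $\chi$ with $\chi^n = \epsilon$ (equivalently, characters of order dividing $n$) is a subgroup of the character group of order exactly $n$.

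First I would handle the case where $a = b^n$ for some $b \in \mathbb{F}_p^{\times}$. On the geometric side, the solutions to $x^n = a$ in $\mathbb{F}_p$ are the elements $b\zeta$ as $\zeta$ ranges over the $n$-th roots of unity in $\mathbb{F}_p$, and there are exactly $n$ such roots of unity because $n \mid (p-1)$. On the character side, every $\chi \in H$ satisfies $\chi(a) = \chi(b)^n = \chi^n(b) = \epsilon(b) = 1$, so the sum equals $|H| = n$, matching the geometric count.

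Next I would handle the case where $a$ is not an $n$-th power. The geometric side is then $0$ by definition. For the character side, I would fix a generator $g$ of $\mathbb{F}_p^{\times}$, write $a = g^k$ with $n \nmid k$, and exhibit a character $\chi_0 \in H$ of order exactly $n$ defined by $\chi_0(g) = \zeta$ for a fixed primitive $n$-th root of unity $\zeta \in \mathbb{C}$. Then $\chi_0(a) = \zeta^k \neq 1$. Since left-multiplication by $\chi_0$ permutes $H$, the sum $S := \sum_{\chi \in H} \chi(a)$ satisfies $\chi_0(a) \cdot S = S$, and $\chi_0(a) \neq 1$ forces $S = 0$.

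The argument is almost entirely structural, so there is no serious obstacle; the main thing to keep straight is the cyclicity of the character group and the explicit production of a separating character $\chi_0$ in the non-residue case, which is immediate from the description of characters on a finite cyclic group.
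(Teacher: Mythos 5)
Your proof is correct and complete: the dichotomy on whether $a$ is an $n$-th power, the count of $n$-th roots of unity via cyclicity of $\mathbb{F}_p^{\times}$, and the orthogonality trick $\chi_0(a)S = S$ with a separating character $\chi_0$ together give exactly the stated identity. The paper itself offers no proof of this lemma, simply citing Proposition 8.1.5 of Ireland and Rosen, and your argument is essentially the standard one given there.
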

Now we recall some standard facts regarding ordinary and Gaussian hypergeometric series. First, the ordinary
${_{2}}F_1$ hypergeometric series has the following integral representation \cite[p. 115]{erdelyi}:
\begin{align}\label{eq1}
{_{2}}F_1\left(\begin{array}{cccc}
                a, & b\\
                 & c
              \end{array}\mid x\right):=\frac{2\Gamma(c)}{\Gamma(b)\Gamma(c-b)}\int_0^{\pi/2}
              \frac{(\sin t)^{2b-1}(\cos t)^{2c-2b-1}}{(1-x\sin^2t)^a}dt,
\end{align}
where Re $c>$ Re $b>0$. Also, by \eqref{eq0} and \eqref{eq00} the Gaussian ${_{2}}F_1$ hypergeometric series over
$\mathbb{F}_p$ takes the form
\begin{align}\label{eq11}
{_{2}}F_1\left(\begin{array}{cccc}
                A, & B\\
                 & C
              \end{array}\mid x \right)=\epsilon(x)\frac{BC(-1)}{p}
              \sum_{y\in\mathbb{F}_p}B(y)\overline{B}C(1-y)\overline{A}(1-xy),
\end{align}
where $\epsilon$ denotes the trivial character.

Next we note two transformation properties of ordinary hypergeometric series.
Kummer's Theorem \cite[p. 9]{bailey} is given by
\begin{align}\label{eq2}
{_{2}}F_1\left(\begin{array}{cccc}
                a, & b\\
                 & 1+b-a
              \end{array}\mid -1 \right)=\frac{\Gamma(1+b-a)\Gamma(1+\frac{b}{2})}{\Gamma(1+b)\Gamma(1+\frac{b}{2}-a)},
\end{align}
while Pfaff's transformation \cite[p. 31]{slater} can be stated as
\begin{align}\label{eq3}
{_{2}}F_1\left(\begin{array}{cccc}
                a, & b\\
                 & c
              \end{array}\mid x \right)=(1-x)^{-a}{_{2}}F_1\left(\begin{array}{cccc}
                a, & c-b\\
                 & c
              \end{array}\mid \frac{x}{x-1} \right).
\end{align}
Greene \cite[p. 91]{greene} proved the following Gaussian analogs of these transformations:
\begin{align}\label{eq4}
{_{2}}F_1\left(\begin{array}{cccc}
                A, & B\\
                 & \overline{A}B
              \end{array}\mid -1 \right)=\left\{
                                                \begin{array}{ll}
                                                0, & \hbox{if $B$ is not a square;} \\
                                                \displaystyle \binom{C}{A}+
                                                \displaystyle \binom{\phi C}{A}, & \hbox{if $B=C^2$}
                                                 \end{array}
                                                 \right.
\end{align}
and
\begin{align}\label{eq5}
{_{2}}F_1\left(\begin{array}{cccc}
                A, & \overline{A}\\
                 & \overline{A}B
              \end{array}\mid \frac{1}{2} \right)=A(-2)\left\{
                                                \begin{array}{ll}
                                                0, & \hbox{if $B$ is not a square;} \\
                                                \displaystyle \binom{C}{A}+
                                                \displaystyle \binom{\phi C}{A}, & \hbox{if $B=C^2$,}
                                                 \end{array}
                                                 \right.
\end{align}
where $\phi$ is the quadratic character of $\mathbb{F}_p$.

\section{\bf Proof of the results}
\begin{pf}{\bf \ref{theorem3}.} Recalling \eqref{curve2}, from the definition of $\Omega(C_{l, \lambda})$, we have
\begin{align}
\Omega(C_{l, \lambda})&=2\int_a^b\frac{dx}{y^{l-1}}\nonumber \\
&=2\int_a^b\frac{dx}{2^{l-1}\{(x-a)(x-b)(x-c)\}^{\frac{l-1}{l}}}.\nonumber
\end{align}
Note that for $a<x<b$ and $0< \lambda <1$, $(x-a)$ is positive, while $(x-b)$ and $(x-c)$ are negative.
Hence $\Omega(C_{l, \lambda})$ is real.

Putting $(x-a)=(b-a)\sin^2\theta$, we obtain
\begin{align}
\Omega(C_{l, \lambda})&=2\int_0^{\pi/2}\frac{2(b-a)\sin\theta \cos\theta}{2^{l-1}
[(b-a)\sin^2\theta (b-a)\cos^2\theta\{(c-a)-(b-a)\sin^2\theta\}]^{\frac{l-1}{l}}}d\theta\nonumber\\
&=\frac{1}{2^{l-3}}\int_0^{\pi/2}\frac{(b-a)^{\frac{2-l}{l}}(\sin\theta)^{\frac{2-l}{l}}(\cos\theta)^{\frac{2-l}{l}}}
{\{(c-a)-(b-a)\sin^2\theta\}^{\frac{l-1}{l}}}d\theta.\nonumber
\end{align}
Using $(b-a)=\lambda$ and $(c-a)=1$ yields
\begin{align}
\Omega(C_{l,\lambda})&=\frac{1}{2^{l-3}\lambda^{\frac{l-2}{l}}}\int_0^{\pi/2}\frac{(\sin\theta)^{2\frac{1}{l}-1}
(\cos\theta)^{2\frac{2}{l}-2\frac{1}{l}-1}}{(1-\lambda \sin^2\theta)^{\frac{l-1}{l}}}d\theta\nonumber\\
&=\frac{(\Gamma(\frac{1}{l}))^2}{2^{l-2}\lambda^{\frac{l-2}{l}}
\Gamma(\frac{2}{l})}\cdot{_{2}}F_1\left(\begin{array}{cccc}
                (l-1)/l, & 1/l\\
                 & 2/l
              \end{array}\mid \lambda \right),\nonumber
\end{align}
where the last equality follows from \eqref{eq1}. This completes the proof of the theorem.
\end{pf}
\begin{remark}
If we put $l=2$ in Theorem \ref{theorem3}, we obtain Theorem \ref{theorem1}.
\end{remark}
\begin{remark}
As mentioned in Remark \ref{lem9}, $C_{3, \lambda}$ is isomorphic over $\mathbb{Q}$ to the elliptic
curve \eqref{curve3}. It would be interesting to know if there is any relation between $\Omega(C_{3, \lambda})$
and the real period of \eqref{curve3}.
\end{remark}
\begin{pf}{\bf \ref{theorem4}.} Since $p\equiv 1 ~(\text{mod}~l)$, there exists a character $\chi$ of order $l$
on $\mathbb{F}_p$. Using \eqref{eq11}, we have
\begin{align}
&\sum_{i=1}^{l-1}\chi^i(-\lambda^2){_{2}}F_1\left(\begin{array}{cccc}
                \overline{\chi^i}, & \chi^i\\
                 & \chi^{2i}
              \end{array}\mid \lambda \right)\notag\\
&=\sum_{i=1}^{l-1}\chi^i(-\lambda^2)\frac{\chi^i\chi^{2i}(-1)}{p}\sum_{t\in\mathbb{F}_p}\chi^i(t)
\overline{\chi^i}\chi^{2i}(1-t)\overline{\overline{\chi^i}}
(1-\lambda t)\nonumber\\
&=\sum_{i=1}^{l-1}\chi^i(-\lambda^2)\frac{\chi^{3i}(-1)}{p}\sum_{t\in\mathbb{F}_p}\chi^i(t)\chi^i(1-t)
\chi^i(1-\lambda t).\nonumber
\end{align}
Replacing $t$ by $\frac{t}{\lambda}$, we get
\begin{align}\label{04}
p\cdot\sum_{i=1}^{l-1}\chi^i(-\lambda^2){_{2}}F_1\left(\begin{array}{cccc}
                \overline{\chi^i}, & \chi^i\\
                 & \chi^{2i}
              \end{array}\mid \lambda \right)&=\sum_{i=1}^{l-1}\sum_{t\in\mathbb{F}_p}
              \chi^i(t(t-1)(t-\lambda))\nonumber\\
&=\sum_{t\in\mathbb{F}_p}\sum_{i=1}^{l-1}\chi^i(t(t-1)(t-\lambda)).
\end{align}
Moreover,
\begin{align}
&\#\{(x, y)\in \mathbb{F}_p^2: y^l=x(x-1)(x-\lambda)\}\nonumber\\
&=\sum_{t\in\mathbb{F}_p}\#\{y\in\mathbb{F}_p: y^l=t(t-1)(t-\lambda)\}\nonumber\\
&=\sum_{t\in\mathbb{F}_p, t(t-1)(t-\lambda)\neq0}
\#\{y\in\mathbb{F}_p: y^l=t(t-1)(t-\lambda)\}+\#\{t\in\mathbb{F}_p: t(t-1)(t-\lambda)=0\}.\nonumber
\end{align}
Now applying Lemma \ref{lemma1} and \eqref{04}, we obtain
\begin{align}
&\#\{(x, y)\in \mathbb{F}_p^2: y^l=x(x-1)(x-\lambda)\}\nonumber\\
&=\sum_{t\in\mathbb{F}_p}\sum_{i=0}^{l-1}\chi^i(t(t-1)(t-\lambda))+
\#\{t\in\mathbb{F}_p: t(t-1)(t-\lambda)=0\}\nonumber\\
&=p+\sum_{t\in\mathbb{F}_p}\sum_{i=1}^{l-1}\chi^i(t(t-1)(t-\lambda))\nonumber\\
&=p+p\cdot\sum_{i=1}^{l-1}\chi^i(-\lambda^2){_{2}}F_1\left(\begin{array}{cccc}
                \overline{\chi^i}, & \chi^i\\
                 & \chi^{2i}
              \end{array}\mid \lambda \right).\nonumber
\end{align}
Since ord$_p(\lambda(\lambda-1))=0$, using \eqref{eq45} we complete the proof of the result.
\end{pf}
\begin{remark}
Theorem \ref{theorem2} can be obtained from Theorem \ref{theorem4} by putting $l=2$. Note that for
the quadratic character $\phi$ of $\mathbb{F}_p$, we have $\phi(-\lambda^2)=\phi(-1)$.
\end{remark}
For $l\geq 3$, the genus of the curve $C_{l, \lambda}$ is $\frac{(l-1)(l-2)}{2}$. The Hasse-Weil bound
therefore yields the following result.
\begin{corollary}
Suppose $l\geq 4$. If $p\equiv 1$ $($\emph{mod} $l)$ and \emph{ord}$_p(\lambda(\lambda-1))=0$, then
$$\left |\sum_{i=1}^{l-1}\chi^i(-\lambda^2){_{2}}F_1\left(\begin{array}{cccc}
                \overline{\chi^i}, & \chi^i\\
                 & \chi^{2i}
              \end{array}\mid \lambda \right)\right|\leq \frac{(l-1)(l-2)}{\sqrt{p}},$$
              where $\chi$ is a character of $\mathbb{F}_p$ of order $l$.

If $l=3$, then $$ \left|2+p\cdot\sum_{i=1}^{2}\chi^i(-\lambda^2){_{2}}F_1\left(\begin{array}{cccc}
                \overline{\chi^i}, & \chi^i\\
                 & \chi^{2i}
              \end{array}\mid \lambda \right)\right|\leq 2{\sqrt{p}},$$
              where $\chi$ is a character of $\mathbb{F}_p$ of order $3$.
\end{corollary}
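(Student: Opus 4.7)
The plan is to deduce the two inequalities directly from Theorem \ref{theorem4} combined with the Hasse--Weil bound, using the genus formula stated in the paragraph immediately preceding the corollary.

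First, I would invoke the Hasse--Weil bound for the smooth projective curve $C_{l,\lambda}$ over $\mathbb{F}_p$: writing $g$ for its genus, the integer $a_p(C_{l,\lambda}) = 1 + p - \#C_{l,\lambda}(\mathbb{F}_p)$ satisfies
$$|a_p(C_{l,\lambda})| \leq 2g\sqrt{p}.$$
Since it is already asserted (and indeed follows from Riemann--Hurwitz applied to the cyclic degree-$l$ map $(x,y)\mapsto x$, ramified over $\{0,1,\lambda,\infty\}$ with the usual ramification pattern depending on $\gcd(3,l)$) that $g = \frac{(l-1)(l-2)}{2}$ for $l\geq 3$, this gives
$$|a_p(C_{l,\lambda})| \leq (l-1)(l-2)\sqrt{p}.$$

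Second, I would substitute the formula for $-a_p(C_{l,\lambda})$ provided by Theorem \ref{theorem4}. In the case $l\geq 4$, Theorem \ref{theorem4} gives
$$-a_p(C_{l,\lambda}) = p\cdot\sum_{i=1}^{l-1}\chi^i(-\lambda^2)\,{_{2}}F_1\!\left(\begin{array}{cc}\overline{\chi^i},&\chi^i\\ &\chi^{2i}\end{array}\mid\lambda\right),$$
so dividing the Hasse--Weil inequality by $p$ produces the first bound exactly as stated. In the case $l=3$, Theorem \ref{theorem4} supplies the extra constant $2$, and since $g=1$ the Hasse--Weil bound reads $|a_p|\leq 2\sqrt{p}$; combining the two yields the second inequality without any division by $p$.

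There is no real obstacle here once one has Theorem \ref{theorem4} and the genus formula in hand; the proof is essentially a bookkeeping exercise. The only point requiring slight care is justifying the genus of $C_{l,\lambda}$. One can either appeal to the standard formula for the genus of a superelliptic curve $y^l = f(x)$ with $\deg f = 3$ and $\gcd(l,3) \in \{1,3\}$, which yields $\frac{(l-1)(l-2)}{2}$ in both the $l\not\equiv 0\pmod 3$ and $l\equiv 0\pmod 3$ cases (the different ramification behavior at infinity is compensated by the different number of points above infinity, as reflected in the remark preceding Theorem \ref{theorem4}), or simply cite this as a well-known fact for cyclic covers. With that in place, both bounds follow immediately.
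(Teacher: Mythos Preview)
Your proposal is correct and follows exactly the paper's approach: the paper simply states the genus formula $g=\frac{(l-1)(l-2)}{2}$ for $l\geq 3$ and then invokes the Hasse--Weil bound together with Theorem~\ref{theorem4}, precisely as you do. Your write-up just makes the division by $p$ in the $l\geq 4$ case and the handling of the extra constant $2$ in the $l=3$ case explicit.
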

\begin{corollary}
If $p\equiv 1~($\emph{mod} $3)$ and $x^2+3y^2=p$, then
$$p\cdot\sum_{i=1}^{2}{_{2}}F_1\left(\begin{array}{cccc}
                \overline{\chi^i}, & \chi^i\\
                 & \chi^{2i}
              \end{array}\mid -1 \right)=(-1)^{x+y}\left(\frac{x}{3}\right)\cdot 2x-2,$$
              where $\chi$ is a character of $\mathbb{F}_p$ of order $3$.
\end{corollary}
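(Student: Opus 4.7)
The plan is to specialize Theorem \ref{theorem4} at $l=3$ and $\lambda=-1$, and then to evaluate the resulting character sum via a Jacobi sum in $\mathbb{Z}[\omega]$.

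Since $\chi$ has order $3$, $\chi(-1)$ is simultaneously a square and a cube root of unity and thus equals $1$, so $\chi^i(-\lambda^2)=1$ at $\lambda=-1$. The $l=3$ case of Theorem \ref{theorem4} at $\lambda=-1$ therefore reads
$$p\cdot\sum_{i=1}^{2}{_{2}}F_1\!\left(\begin{array}{cccc}\overline{\chi^i},&\chi^i\\ &\chi^{2i}\end{array}\mid -1\right) = -a_p(C_{3,-1})-2.$$
By Remark \ref{lem9} specialized at $\lambda=-1$, $C_{3,-1}$ is isomorphic over $\mathbb{Q}$ to the CM elliptic curve $E:y^2=x^3+1$, so $a_p(C_{3,-1})=a_p(E)$. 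The corollary is thus equivalent to the classical identity $a_p(E)=-(-1)^{x+y}\left(\frac{x}{3}\right)\cdot 2x$ whenever $p=x^2+3y^2$.

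I would prove this last identity by a direct evaluation of the same character sum. Following the proof of Theorem \ref{theorem4}, the left-hand side above also equals $\sum_t(\chi+\chi^2)\bigl(t(t-1)(t+1)\bigr)$. The substitution $t\mapsto 1/t$ combined with $\chi^3=\epsilon$ rewrites $\sum_{t\ne 0}\chi^i(t(t-1)(t+1))$ as $\sum_{u\ne 0}\chi^i(1-u^2)$, and a further linear change of variables (e.g.\ $u=1-2w$) identifies this as $\chi^i(4)\,J(\chi^i,\chi^i)-1$. Summing over $i=1,2$ and using $\chi^2=\overline{\chi}$ yields $2\operatorname{Re}\bigl(\chi(4)J(\chi,\chi)\bigr)-2$, hence
$$a_p(E)=-2\operatorname{Re}\bigl(\chi(4)J(\chi,\chi)\bigr).$$

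The main obstacle is the final sign reconciliation: since $|J(\chi,\chi)|^2=p$, one may write $\chi(4)J(\chi,\chi)=u+v\sqrt{-3}$ with $u,v\in\mathbb{Z}$ and $u^2+3v^2=p$, so that $a_p(E)=-2u$ for an integer $u$ realising one representation of $p$ as $x^2+3y^2$. Matching $u$ with $(-1)^{x+y}\left(\frac{x}{3}\right)\cdot x$ is classical and follows from the primary normalisation $J(\chi,\chi)\equiv -1\pmod 3$ in $\mathbb{Z}[\omega]$ (Ireland--Rosen, \S 11); because the expression $(-1)^{x+y}\left(\frac{x}{3}\right)\cdot 2x$ is invariant under $(x,y)\mapsto(\pm x,\pm y)$, verifying the sign in a single canonical normalisation of $(u,v)$ is enough to conclude.
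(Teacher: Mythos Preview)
Your reduction via Theorem \ref{theorem4} and Remark \ref{lem9} is exactly what the paper does. Where you diverge is after that: the paper simply quotes \cite[Proposition 2]{ono} for the value of $a_p$ on $y^2=x^3+1$, whereas you re-derive that value by a direct Jacobi-sum computation. Your character-sum manipulations are correct: the substitutions $t\mapsto 1/t$ (using $\chi^{3}=\epsilon$) and $u=1-2w$ do give $\sum_{t}\chi^i\bigl(t(t-1)(t+1)\bigr)=\chi^i(4)J(\chi^i,\chi^i)-1$, and summing over $i=1,2$ yields $a_p(E)=-2\,\mathrm{Re}\bigl(\chi(4)J(\chi,\chi)\bigr)$. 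So your route is more self-contained, at the cost of having to redo work that Ono's proposition packages.

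One point to tighten: you assert $\chi(4)J(\chi,\chi)=u+v\sqrt{-3}$ with $u,v\in\mathbb{Z}$ and $u^2+3v^2=p$, but an arbitrary element of $\mathbb{Z}[\omega]$ of norm $p$ need not lie in $\mathbb{Z}+\mathbb{Z}\sqrt{-3}$. What is automatic is $2\,\mathrm{Re}(\chi(4)J(\chi,\chi))\in\mathbb{Z}$, and the classical normalization you invoke (primary Jacobi sums, as in Ireland--Rosen) is precisely what pins down both the integrality and the sign. Since you already flag the sign reconciliation as the remaining obstacle and point to the right source, this is a presentational issue rather than a gap; just be explicit that the passage from $\mathbb{Z}[\omega]$ to the form $x^2+3y^2$ is part of that same classical step.
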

\begin{proof}
As mentioned in Remark \ref{lem9}, $C_{3, -1}$ is isomorphic over $\mathbb{Q}$ to the elliptic curve
$y^2=x^3+1.$ By \cite[Proposition 2]{ono}, it is known that $a_p(C_{3, -1})=(-1)^{x+y-1}(\frac{x}{3})\cdot 2x$.
Now the result follows from Theorem \ref{theorem4}.
\end{proof}
\begin{remark}
The formula for $a_p(C_{3,\lambda})$ in Theorem \ref{theorem4} gives the trace of Frobenius of the family of
elliptic curves \eqref{curve3}.
\end{remark}
\begin{pf}{\bf \ref{theorem5}.} By \eqref{eq2}, we have
\begin{align}\label{for-1}
{_{2}}F_1\left(\begin{array}{cccc}
                (l-1)/l, & 1/l\\
                 & 2/l
              \end{array}\mid -1 \right)&=\frac{\Gamma(\frac{2}{l})\Gamma(\frac{2l+1}{2l})}{\Gamma(\frac{l+1}{l})
              \Gamma(\frac{3}{2l})}\nonumber\\
&=\frac{\frac{\Gamma(\frac{2l+1}{2l})}{\Gamma(\frac{l+1}{l})\Gamma(\frac{2l-1}{2l})}}
{\frac{\Gamma(\frac{3}{2l})}{\Gamma(\frac{2}{l})\Gamma(\frac{2l-1}{2l})}}\nonumber\\
&=\frac{\displaystyle \binom{\frac{1}{2l}}{\frac{1}{l}}}{\displaystyle \binom{\frac{3-2l}{2l}}{\frac{2-l}{l}}}.
\end{align}
Putting $\lambda=1/2$ in Theorem \ref{theorem3}, we obtain the relation
$$\frac{2^{\frac{l^2-3l+2}{l}}\Gamma(\frac{2}{l})}{(\Gamma(\frac{1}{l}))^2}\cdot\Omega(C_{l,\frac{1}{2}})
={_{2}}F_1\left(\begin{array}{cccc}
                (l-1)/l, & 1/l\\
                 & 2/l
              \end{array}\mid \frac{1}{2} \right).$$
Then using \eqref{eq3}, we find that
\begin{align}\label{eq46}
\frac{2^{\frac{l^2-3l+2}{l}}\Gamma(\frac{2}{l})}{(\Gamma(\frac{1}{l}))^2}\cdot
\Omega(C_{l,\frac{1}{2}})&=2^{\frac{l-1}{l}}{_{2}}F_1\left(\begin{array}{cccc}
                (l-1)/l, & 1/l\\
                 & 2/l
              \end{array}\mid -1 \right).
\end{align}
From \eqref{for-1} and \eqref{eq46}, we complete the proof of \eqref{eq1/2}.

Now, we shall prove the second part of the result. Note that $p$ is an odd prime.
Write $\chi=w^{k}$, where $w$ is a generator of the group of
Dirichlet characters mod $p$. Let $o(w)$ denote the order of $w$. Then $o(w)=p-1$
and $l=o(w^{k})=(p-1)/\text{gcd}(k,p-1)$. So $(p-1)/l=\text{gcd}(k,p-1)$. If $(p-1)/l$ is even,
then $k$ is also even, hence $\chi$ is a square. Conversely, if $\chi$ is a square,
it is an even power of the generator $w$, hence $k$ is even, and $(p-1)/l=\text{gcd}(k, p-1)$ is even.
This implies that $\chi$ is a square if and only if $(p-1)/l$ is even. Moreover, $\chi^i$ is a square
for even values of $i$, and for odd values of $i$, $\chi^i$ is a square if and only if $\chi$ is a square.
Using these, from Theorem \ref{theorem4} and \eqref{eq5}, we complete the proof of \eqref{eq1/22}.
\end{pf}
In \cite{rouse}, J. Rouse gave an analogy between ordinary hypergeometric series and Gaussian hypergeometric
series by evaluating $\Omega(C_{2, \frac{1}{2}})$ and $a_p(C_{2, \frac{1}{2}})$ in terms of hypergeometric series.
We now give an alternate proof of \cite[Theorem 3, p. 3]{rouse}.
\begin{pf}{\bf \ref{theorem6}.}
Putting $l=2$ in \eqref{eq1/2}, we obtain
\begin{align}
\frac{2^{-\frac{1}{2}}}{(\Gamma(\frac{1}{2}))^2}\cdot\Omega(C_{2, \frac{1}{2}})&
=\frac{{\displaystyle \binom{1/4}{1/2}}}{
\displaystyle \binom{-1/4}{0}}\nonumber
\end{align}
which yields
\begin{align}
\frac{\sqrt{2}}{2\pi}\cdot\Omega(C_{2, \frac{1}{2}})&={1/4 \choose 1/2},\nonumber
\end{align}
since ${-1/4 \choose 0}=1$ and $\Gamma(\frac{1}{2})=\sqrt{\pi}$.

For the second part, recall that $p\equiv 1$ (mod 4). Putting $l=2$ in \eqref{eq1/22}, we find that
$$\frac{-\phi(8)}{p}\cdot a_p(C_{2, \frac{1}{2}})=
\displaystyle \binom{\chi_4}{\phi}+\displaystyle \binom{\phi\chi_4}{\phi},$$
since $\chi_4^2=\phi$. Clearly $\phi\chi_4=\overline{\chi_4}$, and this implies that
$\displaystyle \binom{\phi\chi_4}{\phi}
=\overline{\displaystyle \binom{\chi_4}{\phi}}$.
Also, observing that $\phi(8)=\phi(2)$, we obtain
$$\frac{-\phi(2)}{2p}\cdot a_p(C_{2, \frac{1}{2}})=\text{Re}{\chi_4 \choose \phi}.$$
Since $p\equiv 1$ (mod 4), we have that $\phi(-1)=1$ and the result follows.
\end{pf}
Simplifying the expressions for $a_p(C_{l, \frac{1}{2}})$ given in Theorem \ref{theorem5},
we obtain the following result
which generalizes the case $l=2$, $p\equiv 1$ $($mod $4)$ treated in Theorem \ref{theorem6}.
\begin{corollary}\label{cor1}
Suppose that $p\equiv 1$ $($\emph{mod} $l)$. Then we have
\begin{align}
-a_p(C_{l, \frac{1}{2}})=\left\{\begin{array}{lllll}
2p\cdot\left[\phi(2)\emph{Re}\displaystyle{\chi_4\choose \phi}+
\displaystyle \sum_{i=1}^{\frac{l-4}{4}}\emph{Re}\left\{\chi^{-2i}(8)
\left(\displaystyle \binom{\chi^i}{\chi^{-2i}}+{\phi\chi^i \choose \chi^{-2i}}\right)\right\}\right],\\
&\hspace{-5.2cm} \hbox{if $\frac{p-1}{l}$ is odd and $l\equiv 0~($\emph{mod} $4)$;} \\
2p\cdot\displaystyle \sum_{i=1}^{\frac{l-2}{4}}\emph{Re}\left[\chi^{-2i}(8)
\left(\displaystyle \binom{\chi^i}{\chi^{-2i}}+{\phi\chi^i \choose \chi^{-2i}}\right)\right],\\
&\hspace{-5.2cm}\hbox{if $\frac{p-1}{l}$ is odd and $l\equiv 2~($\emph{mod} $4)$;}\\
2p\cdot\left[\phi(2)\emph{Re}\displaystyle{\chi_4\choose \phi}+
\displaystyle \sum_{i=1}^{\frac{l-2}{2}}\emph{Re}\left\{\psi^{-2i}(8)
\left(\displaystyle \binom{\psi^i}{\psi^{-2i}}+{\phi\psi^i \choose \psi^{-2i}}\right)\right\}\right],\\
&\hspace{-5.2cm} \hbox{if $\frac{p-1}{l}$ and $l$ are even;}\\
2p\cdot\displaystyle \sum_{i=1}^{\frac{l-1}{2}}\emph{Re}\left[\psi^{-2i}(8)
\left(\displaystyle \binom{\psi^i}{\psi^{-2i}}+{\phi\psi^i \choose \psi^{-2i}}\right)\right],\\
&\hspace{-5.2cm}\hbox{if $\frac{p-1}{l}$ is even and $l$ is odd, $l\geq 5$;}\\
2+2p\cdot \emph{Re}\left[\displaystyle {\chi \choose \chi}+ \displaystyle{\phi\chi \choose \chi}\right],
&\hspace{-5.2cm}\hbox{if $l=3$;}
\end{array}
\right.
\end{align}
where $\psi, \chi, \chi_4$ are characters of $\mathbb{F}_p$ of order $2l, l, 4$ respectively and
$\phi$ is the quadratic
character.
\end{corollary}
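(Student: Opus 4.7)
The plan is to simplify each case of Theorem \ref{theorem5} by exploiting the fact that $a_p(C_{l,1/2})$ is a rational integer, and hence real, so the sum on the right-hand side of each case must coincide with its own complex conjugate. In every subcase I would group the summands into complex-conjugate pairs and possibly a single self-conjugate middle term, and then rewrite the resulting expression in terms of real parts.

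I would first record the conjugation symmetry. The identity $\overline{\binom{A}{B}} = \binom{\overline{A}}{\overline{B}}$ is immediate from \eqref{eq0}, and the substitution $\chi \mapsto \overline{\chi}$ in \eqref{eq00} gives $\overline{{}_2F_1(A,B;C;x)} = {}_2F_1(\overline{A},\overline{B};\overline{C};x)$. Consequently, the Theorem \ref{theorem4} summand $T_i(\lambda) := \chi^i(-\lambda^2)\,{}_2F_1(\overline{\chi^i},\chi^i;\chi^{2i};\lambda)$ satisfies $T_{l-i} = \overline{T_i}$, and this symmetry descends to the sums in Theorem \ref{theorem5}: in case $1$ the relabeling $j = i/2$ converts it into $j \leftrightarrow l/2 - j$, whereas in cases $2$ and $3$ it is simply $i \leftrightarrow l - i$. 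I would also observe that when $(p-1)/l$ is odd, the parity of $p-1$ forces $l$ to be even; this is why subcases (a) and (b) of the corollary both assume $l$ even.

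The four main subcases are then distinguished by whether the pairing admits a fixed point. In subcase (b), $l \equiv 2 \pmod{4}$ makes the potential fixed point $j = l/4$ non-integral, so the sum decomposes into $(l-2)/4$ conjugate pairs and yields the displayed $\sum \text{Re}(\cdots)$; subcase (d) is identical for $l$ odd, using $i \leftrightarrow l-i$. In subcase (a) with $l \equiv 0 \pmod{4}$, the index $j = l/4$ is fixed and gives a middle term that I would compute directly: $\chi^{l/4}$ is a character of order $4$, so may be taken to equal $\chi_4$, while $\chi^{-l/2} = \phi$; using $\phi\chi_4 = \overline{\chi_4}$ the middle summand collapses to $2\phi(2)\,\text{Re}\binom{\chi_4}{\phi}$. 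Subcase (c) is analogous, with $\psi^{l/2}$ (of order $4$, since $\psi$ has order $2l$) playing the role of $\chi_4$. For subcase (e), $p \equiv 1 \pmod 3$ together with $p$ odd forces $6 \mid p-1$, so $(p-1)/3$ is even and we are in the second case of Theorem \ref{theorem5}; choosing $\psi$ of order $6$ with $\psi^2 = \chi$ one computes $\psi^3 = \phi$, hence $\overline{\psi} = \psi^{-1} = \phi\chi$, which shows the $i=1$ and $i=2$ summands are conjugates and the sum reduces to $2\,\text{Re}[\binom{\chi}{\chi} + \binom{\phi\chi}{\chi}]$.

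The principal technical nuisance lies in the middle summands of subcases (a) and (c). There one must verify that the ambiguity in choosing a square root of $\phi$ in \eqref{eq5} is harmless: flipping $\chi_4 \leftrightarrow \overline{\chi_4}$ simply permutes the two binomial coefficients $\binom{\chi_4}{\phi}$ and $\binom{\phi\chi_4}{\phi}$, so their sum is well defined. Once the middle term is identified with $2\phi(2)\,\text{Re}\binom{\chi_4}{\phi}$, adding it to twice the real part of the surviving half-sum in each case delivers exactly the formulas of Corollary \ref{cor1}.
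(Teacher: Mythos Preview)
Your proposal is correct and follows the approach the paper intends: the paper gives no proof of Corollary~\ref{cor1} beyond the phrase ``simplifying the expressions for $a_p(C_{l,1/2})$ given in Theorem~\ref{theorem5}'', and your conjugate-pairing argument is precisely the simplification that is meant. The key observations you isolate --- that $(p-1)/l$ odd forces $l$ even, that $\phi=\chi^{l/2}$ in that case so the Theorem~\ref{theorem5} summands satisfy $\overline{S_i}=S_{l/2-i}$, and that the middle term (when present) collapses via $\phi\chi_4=\overline{\chi_4}$ to $2\phi(2)\,\mathrm{Re}\binom{\chi_4}{\phi}$ --- are exactly what is needed, and your treatment of the $l=3$ case via $\psi$ of order~$6$ is clean.
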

\begin{corollary}
If $p\equiv 1$ $($\emph{mod} $3)$ and $x^2+3y^2=p$, then
\begin{align}
p\cdot \emph{Re}\left[\displaystyle {\chi \choose \chi}+ \displaystyle{\phi\chi \choose \chi}\right]=(-1)^{x+y}
\left(\frac{x}{3}\right)\cdot x-1,\nonumber
\end{align}
where $\chi$ is a character of order $3$ on $\mathbb{F}_p$ and $\phi$ is the quadratic character.
\end{corollary}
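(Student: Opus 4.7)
The plan is to combine the $l=3$ branch of Corollary \ref{cor1} with the known formula for the trace of Frobenius of $y^2 = x^3 + 1$, in direct parallel with the earlier corollary that treats $C_{3,-1}$.

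First, I would verify that $C_{3,1/2}$ is isomorphic over $\mathbb{Q}$ to the elliptic curve $y^2 = x^3 + 1$. By Remark \ref{lem9}, $C_{3,\lambda}$ is isomorphic over $\mathbb{Q}$ to $y^2 = x^3 + \left(\frac{\lambda-1}{2\lambda^2}\right)^2$; substituting $\lambda = 1/2$ gives $\frac{\lambda-1}{2\lambda^2} = \frac{-1/2}{1/2} = -1$, so the target is $y^2 = x^3 + 1$. Hence $a_p(C_{3,1/2}) = a_p(y^2 = x^3+1)$, and by \cite[Proposition 2]{ono} (exactly the input used in the proof of the preceding $\lambda = -1$ corollary) this common value equals $(-1)^{x+y-1}\left(\frac{x}{3}\right)\cdot 2x$ whenever $p \equiv 1 \pmod 3$ and $x^2 + 3y^2 = p$.

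Second, I would read off the $l=3$ case of Corollary \ref{cor1},
$$-a_p(C_{3,1/2}) = 2 + 2p\cdot \emph{Re}\left[\binom{\chi}{\chi} + \binom{\phi\chi}{\chi}\right],$$
and equate the two expressions for $-a_p(C_{3,1/2})$. This yields
$$(-1)^{x+y}\left(\frac{x}{3}\right)\cdot 2x = 2 + 2p\cdot \emph{Re}\left[\binom{\chi}{\chi} + \binom{\phi\chi}{\chi}\right],$$
and dividing by $2$ and transposing the constant produces the claimed identity.

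There is essentially no obstacle: the entire argument is a one-line substitution once the quantity $\frac{\lambda-1}{2\lambda^2}$ is evaluated at $\lambda = 1/2$. It is worth noting the happy coincidence that this expression takes the same value ($-1$) at both $\lambda = -1$ and $\lambda = 1/2$, which is precisely what allows the same Ono formula to feed two different corollaries; aside from tracking the sign flip between $a_p$ and $-a_p$, no further work is needed.
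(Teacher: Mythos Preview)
Your proposal is correct and matches the paper's proof essentially line for line: both invoke Remark \ref{lem9} to identify $C_{3,1/2}$ with $y^2=x^3+1$, apply \cite[Proposition 2]{ono} for the trace of Frobenius, and equate with the $l=3$ case of Corollary \ref{cor1}. The only cosmetic difference is that the paper phrases the identification via $a_p(C_{3,-1})=a_p(C_{3,1/2})$, whereas you compute $\frac{\lambda-1}{2\lambda^2}\big|_{\lambda=1/2}=-1$ directly.
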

\begin{proof}
As mentioned in Remark \ref{lem9}, $C_{3, -1}$ and $C_{3, \frac{1}{2}}$ are isomorphic over $\mathbb{Q}$ to the
elliptic curve $y^2=x^3+1.$ By \cite[Proposition 2]{ono}, it is known that
$a_p(C_{3, -1})=(-1)^{x+y-1}(\frac{x}{3})\cdot 2x$. From Corollary \ref{cor1}, we have
\begin{align}
-a_p(C_{3, \frac{1}{2}})=2+2p\cdot \text{Re}\left[\displaystyle {\chi \choose \chi}+
\displaystyle{\phi\chi \choose \chi}\right].\nonumber
\end{align}
Since $a_p(C_{3, -1})= a_p(C_{3, \frac{1}{2}})$, the result follows.
\end{proof}

\section*{Acknowledgment}
We thank Ken Ono for many helpful suggestions during the preparation of the article. We are grateful to
the referee for his/her helpful comments.


\begin{thebibliography}{99}
\bibitem{andrews} G. E. Andrews, R. Askey, and R. Roy, {\it Special functions}, Encyclopedia of
Mathematics and its Application, 71, Cambridge University Press, Cambridge, 1999.


\bibitem{bailey}
W. Bailey, {\it Generalized hypergeometric series}, Cambridge Univ. Press, Cambridge (1935).

\bibitem{Berndt}
B. C. Berndt, R. J. Evans, and K. S. Williams, {\it Gauss and Jacobi Sums},
Canadian Mathematical Society Series of Monographs and Advanced Texts, A Wiley-Interscience Publication (1997).

\bibitem{erdelyi}
A. Erdelyi et al., {\it Higher Transcendental Functions}, Vol. 1, McGraw-Hill, New York, 1953.

\bibitem{greene}
J. Greene, {\it Hypergeometric functions over finite fields}, Trans. Amer. Math. Soc. 301(1)(1987), 77-101.

\bibitem{greene1}
J. Greene, {\it Hypergeometric function over finite fields and representation of $SL(2,q)$},
Rocky Mountain J. Math. 23(2) (1993), 547-568.

\bibitem{greenestanton}
J. Greene and D. Stanton, {\it A character sum evaluation and Gaussian hypergeometric series},
J. Number Theory, 23(1986), 136-148.

\bibitem{housemoller}
D. Housem\"{o}ller, {\it Elliptic Curves}, Graduate Text in Mathematics, Springer (2002).

\bibitem{ireland}
K. Ireland, M. Rosen, {\it A Classical Inroduction to Modern Number Theory}, Springer International Edition,
Springer(2005).

\bibitem{koike}
M. Koike, {\it Hypergeometric series over finite fields and Ap\'{e}ry numbers}, Hiroshima Math. J. 22(1992), 461-467.

\bibitem{mccarthy}
D. McCarthy, {\it ${_{3}}F_2$ Hypergeometric series and periods of elliptic curves},
Int. J. Number Theory, 6(3)(2010), 461-470.

\bibitem{ono}
K. Ono, {\it Values of Gaussian hypergeometric series}, Trans. Amer. Math. Soc. 350(3)(1998), 1205-1223.

\bibitem{rouse}
J. Rouse, {\it Hypergeometric function and elliptic curves}, Ramanujan J. 12(2)(2006), 197-205.

\bibitem{silverman}
J. H. Silverman and J. Tate, {\it Rational points on elliptic curves}, Springer (2005).

\bibitem{slater}
L. J. Slater, {\it Generalized hypergeometric functions}, Cambridge Univ. Press, Cambridge (1966).


\end{thebibliography}
\end{document}